\DeclareMathOperator*{\mathsup}{sup}
\DeclareMathOperator\erf{erf}
\newcommand{\RR}{\ensuremath{\mathbb{R}}}
\theoremstyle{definition}
\newtheorem{thm}{Theorem}[section]
\newtheorem{cor}[thm]{Corollary}
\newtheorem{prop}[thm]{Proposition}
\theoremstyle{definition}
\theoremstyle{remark}
\newtheorem{rem}[thm]{Remark}
\begin{document}

\title{Quantitative Assessment of Robotic Swarm Coverage} 

\author{\authorname{Brendon G. Anderson\sup{1}, Eva Loeser\sup{2}, Marissa Gee\sup{3}, Fei Ren\sup{1}, Swagata Biswas\sup{1}, \\Olga Turanova\sup{1}, Matt Haberland\sup{1*}, and Andrea L. Bertozzi\sup{1}}
\affiliation{\sup{1}UCLA, Dept. of Mathematics, Los Angeles, CA 90095}
\affiliation{\sup{2}Brown University, Mathematics Department, Providence, RI 02912}
\affiliation{\sup{3}Harvey Mudd College, Dept. of Mathematics, Claremont, CA 91711}
\email{\sup{*} Corresponding author: haberland@ucla.edu}
}

\keywords{Swarm Robotics, Multi-agent Systems, Coverage, Optimization, Central Limit Theorem.}

\abstract{This paper studies a generally applicable, sensitive, and intuitive error metric for the assessment of robotic swarm density controller performance. Inspired by vortex blob numerical methods, it overcomes the shortcomings of a common strategy based on discretization, and unifies other continuous notions of coverage. We present two benchmarks against which to compare the error metric value of a given swarm configuration: non-trivial bounds on the error metric, and the  probability density function of the error metric when robot positions are sampled at random from the target swarm distribution. We give rigorous results that this probability density function of the error metric obeys a central limit theorem, allowing for more efficient numerical approximation. For both of these benchmarks, we present supporting theory, computation methodology, examples, and MATLAB implementation code.}

\onecolumn \maketitle \normalsize \vfill

\section{\uppercase{Introduction}}
\label{sec:introduction}

\noindent Much of the research in swarm robotics has focused on determining control laws that elicit a desired group behavior from a swarm \cite{brambilla2013swarm}, while less attention has been placed on methods for quantifying and evaluating the performance of these controllers. Both \cite{brambilla2013swarm} and \cite{cao1997cooperative}  point out the lack of developed performance metrics for assessing and comparing swarm behavior, and \cite{brambilla2013swarm} notes that when performance metrics are developed, they are often too specific to the task being studied to be useful in comparing performance across controllers. This paper develops an error metric that evaluates one common desired swarm behavior: distributing the swarm according to a prescribed spatial density.

In many applications of swarm robotics, the swarm must spread across a domain according to a target distribution in order to achieve its goal. Some examples are in surveillance and area coverage \cite{bruemmer2002robotic,hamann2006analytical,howard2002mobile,schwager2006distributed}, achieving a heterogeneous target distribution \cite{elamvazhuthi2016coverage,berman2011design,demir2015decentralized,shen2004hormone,elamvazhuthi2015optimal}, and aggregation and pattern formation \cite{soysal2006macroscopic,spears2004distributed,reif1999social,sugihara1996distributed}. Despite the importance of assessing performance, some studies such as \cite{shen2004hormone} and \cite{sugihara1996distributed} rely only on qualitative methods such as visual comparison. Others present performance metrics that are too specific to be used outside of the specific application, such as measuring cluster size in \cite{soysal2006macroscopic}, distance to a pre-computed target location in \cite{schwager2006distributed}, and area coverage by tracking the path of each agent in \cite{bruemmer2002robotic}. In \cite{reif1999social}  an $L^2$ norm of the difference between the target and achieved swarm densities is considered, but the notion of achieved swarm density is particular to the controllers under study.

We develop and analyze an error metric that quantifies how well a swarm achieves a prescribed spatial distribution. Our method is independent of the controller used to generate the swarm distribution, and thus has the potential to be used in a diverse range of robotics applications.  
In \cite{li2017decentralized} and \cite{zhang2017performance}, error metrics similar to the one presented here are used, but their properties are not discussed in sufficient detail for them to be widely adopted. 
In particular, although the error metric that we study always takes values somewhere between 0 and 2, these values are, in general, not achievable for an arbitrary desired distribution and a fixed number of robots. How then, in general, is one to judge whether the value of the error metric, and thus the robot distribution, achieved by a given swarm control law is ``good'' or not? We address this by studying two benchmarks, 
\begin{enumerate}
\item the \emph{global extrema} of the error metric, and
\item the \emph{probability density function} (PDF) of the error metric  when robot positions are sampled from the target distribution,
\end{enumerate}
which were first proposed in \cite{li2017decentralized}. Using tools from nonlinear programming for (1) and rigorous probability results for (2), we put each of these benchmarks on a firm foundation.  In addition, we provide MATLAB code for performing all calculations at \url{https://git.io/v5ytf}. Thus, by using the methods developed here, one can assess the performance of a given controller for any target distribution by comparing the error metric value of robot configurations produced by the controller against benchmarks (1) and (2).

Our paper is organized as follows. Our main definition, its basic properties, and a comparison to common discretization methods is presented in Section \ref{sec:error}. Then,  Section \ref{sec:extrema} and Section \ref{sec:errorpdf} are devoted to studying (1) and (2), respectively.  
We suggest future work and conclude in Section \ref{sec:conclusion}.

\section{\uppercase{Quantifying Coverage}}
\label{sec:error}

\noindent One difficulty in quantifying swarm coverage is that the target density within the domain is often prescribed as a continuous (or piecewise continuous) function, yet the swarm is actually composed of a finite number of robots at discrete locations. A common approach for comparing the actual positions of robots to the target density function begins by discretizing the domain (e.g. \cite{berman2011design,demir2015decentralized}). We demonstrate the pitfalls of this in Subsection \ref{sec:naive}. Another possible route (the one we take here) is to use the robot positions to construct a continuous function that represents coverage (e.g. \cite{HS,ZC,ayvali2017ergodic}). It is also possible to use a combination of the two methods, as in \cite{Cortes}. 

The method we present and analyze is inspired by vortex blob numerical methods for the Euler equation and the aggregation equation (see \cite{CB} and the references therein). A similar strategy was used in \cite{li2017decentralized} and \cite{zhang2017performance} to measure the effectiveness of a certain robotic control law, but to our knowledge, our work is the first study of any such  method in a form sufficiently general for common use. 

\subsection{Definition}

We are given a bounded region\footnote{We present our definitions for any number of dimensions $d\geq 1$ to demonstrate their generality. However, in  the latter sections of the paper, we restrict ourselves to $d=2$, a common setting in ground-based applications.}  
$\Omega \in \RR^d$, a desired robot distribution $\rho:\Omega \rightarrow (0, \infty)$ satisfying $\int_\Omega \rho(z)\, dz=1$, and $N$ robot positions $x_1, ..., x_N\in \Omega$.  
To compare the discrete points $x_1, ... x_N$ to the function $\rho$, we place a ``blob" of some shape and size at each point $x_i$. The shape and size parameters have two  physical interpretations as:
\begin{itemize}
\item the area in which an individual robot performs its task, or
\item inherent uncertainty in the robot's position.
\end{itemize}
Either of these interpretations (or a combination of the two) can be invoked to make a meaningful choice of these parameters.  

The blob or \emph{robot blob function} can be any function $K(z):\RR^d \rightarrow \RR$ that  is non-negative on $\Omega$ and satisfies $\int_{\RR^d}K(z)dz = 1$. While not required, it is often natural to use a $K$ that is radially symmetric and decreasing along radial directions. In this case, we need one more parameter, a positive number $\delta$, that controls how far the effective area (or inherent positional uncertainty) of the robot extends. We then define $K^\delta$ as,
\begin{equation}
\label{def:Kdelta}
K^\delta(z)=\frac{1}{\delta^d} K\left(\frac{z}{\delta}\right).
\end{equation}
We point out that we still have $\int_{\RR^d} K^\delta(z)\, dz=1$. One choice of $K^\delta$ would be a scaled indicator function, for instance, a function of constant value within a disc of radius $\delta$ and $0$ elsewhere. This is an appropriate choice when a robot is considered to either perform its task at a point or not, and where there is no notion of the degree of its effectiveness. For the remainder of this paper, however, we usually take $K$ to be the Gaussian 
\[
G(z)=\frac{1}{2\pi}\exp\left(-\frac{|z|^2}{2}\right),
\]
which is useful when the robot is most effective at its task locally and to a lesser degree some distance away. To define the \emph{swarm blob function} $\rho_N^\delta$, we place a blob $G^\delta$ at each robot position $x_i$, sum over $i$ and renormalize,  yielding,
\begin{equation}
\label{eqn:normalized_blob_function}
\rho_N^\delta(z;x_1,..., x_N) = \frac{\sum_{i=1}^N G^\delta(z-x_i)}{\sum_{i=1}^N \int_\Omega G^\delta(z-x_i)\, dz}.
\end{equation}
For brevity, we usually write $\rho_N^\delta(z)$ to mean $\rho_N^\delta(z;x_1,..., x_N)$. 
This swarm blob function gives a continuous representation of how the discrete robots are distributed. 
Note that each integral in the denominator of (\ref{eqn:normalized_blob_function}) approaches $1$ if $\delta$ is small or all robots are far from the boundary, so that we have,
\begin{equation}
\label{eqn:rho with N}
\rho_N^\delta(z) \approx \frac{1}{N}\sum_{i=1}^N G^\delta(z - x_i).
\end{equation}

We now define our notion of error, which we refer to as the \emph{error metric}: 
\begin{equation}
\label{eqn:instantaneous_error_metric}
e_N^\delta(x_1,...,x_N)=\int_\Omega \left| \rho_N^\delta(z) - \rho(z) \right| dz .
\end{equation}
We often write this as $e_N^\delta $ for brevity. 

\subsection{Remarks and Basic Properties}
\label{sec:error_properties}
Our error is  defined as the $L^1$ norm between the swarm blob function and the desired robot distribution $\rho$. One could use another $L^p$ norm; however, $p=1$ is a standard choice in applications that involve particle transportation and coverage such as \cite{zhang2017performance}. Moreover, the $L^1$ norm has a key property:  for any two integrable functions $f$ and $g$, 
\[
\int_\Omega\left|f-g\right| \, dz = 2\mathsup_{B\subset \Omega} \left|\int_B f \, dz - \int_B g \,  dz\right|.
\]
The other $L^p$ norms do not enjoy this property \cite[Chapter 1]{DG}. Consequently, by measuring $L^1$ norm on $\Omega$, we are also bounding the error we make on any particular subset, and, moreover, knowing the error on ``many" subsets gives an  estimate of the total error. This means that by using the $L^1$ norm we capture the idea  that discretizing the domain provides a measure of error, but avoid the pitfalls of discretization methods described in Subsection \ref{sec:naive}.

Studies in optimal control of swarms often use the $L^2$ norm due to the favorable inner product structure \cite{zhang2017performance}. We point out that the $L^1$ norm is bounded from above by the $L^2$ norm due to the Cauchy-Schwarz inequality and the fact that $\Omega$ is a bounded region. Thus, if an optimal control strategy controls the $L^2$ norm, then it will also control the error metric we present here.

Last, we note:
\begin{prop}
For any $\Omega$, $\rho$, $\delta$, $N$, and $(x_1,...,x_N)$, 
\[
0\leq e_N^\delta \leq 2.
\]
\end{prop}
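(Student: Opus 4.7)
The plan is straightforward: the lower bound is immediate from nonnegativity of the integrand, and the upper bound follows from the triangle inequality together with the fact that both $\rho$ and $\rho_N^\delta$ are probability densities on $\Omega$.

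First I would observe that $|\rho_N^\delta(z) - \rho(z)| \geq 0$ pointwise, so integrating over $\Omega$ immediately yields $e_N^\delta \geq 0$. For the upper bound, I would apply the triangle inequality pointwise,
\[
|\rho_N^\delta(z) - \rho(z)| \leq \rho_N^\delta(z) + \rho(z),
\]
which is valid because both $\rho_N^\delta$ and $\rho$ are nonnegative on $\Omega$ (note that $\rho > 0$ by assumption, and $\rho_N^\delta \geq 0$ since each $G^\delta$ is a nonnegative Gaussian blob and the normalizing constant in the denominator of \eqref{eqn:normalized_blob_function} is strictly positive). Integrating gives
\[
e_N^\delta \leq \int_\Omega \rho_N^\delta(z)\, dz + \int_\Omega \rho(z)\, dz.
\]

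The second integral equals $1$ by the hypothesis on $\rho$. For the first, I would use the definition \eqref{eqn:normalized_blob_function}: the denominator was constructed precisely so that
\[
\int_\Omega \rho_N^\delta(z)\, dz = \frac{\sum_{i=1}^N \int_\Omega G^\delta(z-x_i)\, dz}{\sum_{i=1}^N \int_\Omega G^\delta(z-x_i)\, dz} = 1,
\]
which is the key point and really the only substantive step in the argument. Combining gives $e_N^\delta \leq 2$.

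There is no real obstacle here; the only thing to be a bit careful about is to use the normalized form of $\rho_N^\delta$ from \eqref{eqn:normalized_blob_function} rather than the approximation \eqref{eqn:rho with N}, since the latter integrates to exactly $1$ only in the limit of small $\delta$ or robots far from the boundary, whereas the former integrates to $1$ exactly by construction.
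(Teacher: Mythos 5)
Your proof is correct and follows essentially the same route as the paper's: the paper also deduces both bounds from the triangle inequality for the $L^1$ norm together with the fact that $\rho$ and $\rho_N^\delta$ each integrate to $1$ over $\Omega$ by the normalization in \eqref{eqn:normalized_blob_function}. Your explicit verification that the denominator of \eqref{eqn:normalized_blob_function} makes $\int_\Omega \rho_N^\delta\, dz = 1$ exactly is precisely what the paper compresses into the phrase ``and our normalization.''
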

\begin{proof} This follows directly from the basic property 
\[
\int |f | \, dz - \int |g| \, dz \leq \int|f-g| \, dz \leq \int |f| \, dz +\int |g| \, dz 
\]
and our normalization.
\end{proof} 
The theoretical minimum of $e^\delta_N$  can only be approached for a general target distribution when $\delta$ is small and $N$ is large, or in the trivial special case when the target distribution is exactly the sum of $N$ Gaussians of the given $\delta$, motivating the need to develop benchmarks (1) and (2).

\subsection{Variants of the Error Metric}
The notion of error defined by (\ref{eqn:instantaneous_error_metric}) is suitable for tasks that require good instantaneous coverage. For tasks that involve tracking average coverage over some period of time (and in which the robot positions are functions of time $t$), an alternative ``cumulative'' version of the error metric is
\begin{equation}
\label{eqn:cumulative_error_metric}
\int_\Omega \left| \frac{1}{M}\sum_{j = 1}^M{\rho_N^\delta(z,t_j)} - \rho_\Omega(z) \right| dz
\end{equation}
for time points $j = 1, \dots, M$. This is a practical, discrete-time version of the metric used in \cite{zhang2017performance}, which uses a time integral rather than a sum, as in practice, position measurements can only be made at discrete times. While this cumulative error metric is, in general, distinct from the instantaneous version of (\ref{eqn:instantaneous_error_metric}), note that the extrema and  PDF of this cumulative version can be calculated as the extrema and PDF of the instantaneous error metric  with $MN$ robots. Therefore, in subsequent sections we restrict our attention to the extrema and PDF of the instantaneous formulation without loss of generality.

In addition, \cite{zhang2017performance} considers a one-sided notion of error, in which a scarcity of robots is penalized but an excess is not, that is,
\[
\hat{e}_N^\delta = \int_{\Omega^-} \left| \rho^\delta_N(z) - \rho(z) \right| \, dz, 
\]
where $\Omega^-:=\{z|\rho_N^\delta(z)\leq \rho(z)\}$. Remarkably, $\hat{e}_N^\delta$ and $e_N^\delta$ are related by:
\begin{prop}$e_N^\delta = 2\hat{e}_N^\delta$. \label{prop:hat}
\end{prop}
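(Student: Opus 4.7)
The plan is to use the fact that both $\rho_N^\delta$ and $\rho$ are probability densities on $\Omega$, so the signed integral of their difference vanishes. This forces the portions of the $L^1$ error contributed by the region where $\rho_N^\delta$ exceeds $\rho$ and the region where $\rho_N^\delta$ falls short of $\rho$ to be equal in magnitude. Since $\hat{e}_N^\delta$ picks out exactly the latter region, the factor of two emerges naturally.

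Concretely, I would set $\Omega^+ := \{z \in \Omega : \rho_N^\delta(z) > \rho(z)\}$, so that $\Omega = \Omega^+ \cup \Omega^-$ with intersection of measure zero (the set $\{\rho_N^\delta = \rho\}$ contributes nothing to any of the integrals in question). By the definition of the absolute value,
\[
e_N^\delta = \int_{\Omega^+}(\rho_N^\delta - \rho)\,dz + \int_{\Omega^-}(\rho - \rho_N^\delta)\,dz,
\]
and the second summand is precisely $\hat{e}_N^\delta$. Thus it suffices to show that the first summand also equals $\hat{e}_N^\delta$.

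This final equality is immediate from normalization. The definition (\ref{eqn:normalized_blob_function}) makes $\int_\Omega \rho_N^\delta\,dz = 1$ by construction, and $\int_\Omega \rho\,dz = 1$ by hypothesis. Subtracting and splitting the integral over $\Omega^+$ and $\Omega^-$ gives
\[
0 = \int_\Omega (\rho_N^\delta - \rho)\,dz = \int_{\Omega^+}(\rho_N^\delta - \rho)\,dz - \int_{\Omega^-}(\rho - \rho_N^\delta)\,dz,
\]
so both summands in the expression for $e_N^\delta$ equal $\hat{e}_N^\delta$, yielding $e_N^\delta = 2\hat{e}_N^\delta$. There is no substantive obstacle here; the only point requiring any care is verifying the normalization of $\rho_N^\delta$, which is evident from the denominator in its defining formula.
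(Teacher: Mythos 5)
Your proposal is correct and follows essentially the same route as the paper's own proof: split $\Omega$ into the sets where $\rho_N^\delta$ exceeds or falls short of $\rho$, and use the common normalization $\int_\Omega \rho_N^\delta\,dz = 1 = \int_\Omega \rho\,dz$ to conclude that the two contributions to the $L^1$ error are equal, each being $\hat{e}_N^\delta$. No gaps.
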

\begin{proof}
Let $\Omega^+ =  \Omega \setminus \Omega^-$. Since $\Omega = \Omega^-\cup \Omega^+$, we have,
\begin{align*}
\int_{\Omega^-}\rho_N^\delta\, dz &+ \int_{\Omega^+}\rho_N^\delta\, dz = \int_\Omega \rho_N^\delta \, dz= 1 =\\&=
\int_\Omega \rho \, dz= \int_{\Omega^-}\rho\, dz+\int_{\Omega^+}\rho\, dz.
\end{align*}
Rearranging and taking absolute values we find
\[
\int_{\Omega^-} \left| \rho_N^\delta - \rho \right| \, dz= \int_{\Omega^+} \left| \rho_N^\delta - \rho \right|\, dz,
\]
as each integrand is of the same sign everywhere within the limits of integration. We notice that the left-hand side and therefore the right-hand side of the previous line equal $\hat{e}_N^\delta$. On the other hand, their sum equals $e^\delta_N$. Thus our claim holds.  
\end{proof}

The definition of $\hat{e}_N^\delta$ is particularly useful in conjunction with the choice of $K^\delta$ as a scaled indicator function, as $\hat{e}_N^\delta$ becomes a direct measure of the deficiency in coverage of a robotic swarm. For instance, given a swarm of surveillance robots, each with observational radius $\delta$, $\hat{e}_N^\delta$ is the percentage of the domain not observed by the swarm.\footnote{The notion of ``coverage'' in \cite{bruemmer2002robotic} might be interpreted as $\hat{e}_N^\delta$ with $\delta$ as the width of the robot. There, only the time to complete coverage ($\hat{e}_N^\delta = 0$) was considered.} Proposition \ref{prop:hat} implies that $\frac{1}{2}e_N^\delta$ also enjoys this interpretation.

\subsection{Calculating $e_N^\delta$}
In practice, the integral in (\ref{eqn:instantaneous_error_metric}) can rarely be carried out analytically, primarily because the integral needs to be separated into regions for which the quantity $\rho_N^\delta(z) - \rho(z)$ is positive and regions for which it is negative, the boundaries between which are usually difficult to express in closed form. We find that a simple generalization of the familiar rectangle rule converges linearly in dimensions $d\leq 3$ and expect Monte Carlo and Quasi-Monte Carlo methods to produce a reasonable estimate in higher dimensions\footnote{We look forward to a physical swarm of robots being deployed -- and these results employed -- in four dimensions!}. More advanced quadrature rules can be used in low dimensions, but may suffer in accuracy due to nonsmoothness in the target distribution and/or stemming from the absolute value taken within the integral. 

\subsection{The Pitfalls of Discretization}
\label{sec:naive}
We conclude this section by analyzing a  measure of error that involves discretizing the domain. In particular, we  show in Propositions \ref{prop:disc1} and \ref{prop:disc} that the values produced by this method are strongly dependent on a choice of discretization. 
In particular, this error approaches its theoretical minimum when the discretization is too coarse and its theoretical maximum when the discretization is too fine, regardless of robot positions.

Discretizing the domain means dividing  $\Omega$  into $M$ disjoint regions $\Omega_i \subset \Omega$ such that $\bigcup_{i=1}^{M}\Omega_i = \Omega$. 
Within each region, the desired proportion of robots is the integral of the target density function within the region $\int_{\Omega_i} \rho(z) dz$. Using $N_i$ to denote the observed number of robots in $\Omega_i$, we can define an  error metric as
\begin{equation}
\label{eqn:naive_error}
\mu = \sum_{i=1}^M \left| \int_{\Omega_i} \rho(z)\, dz - \frac{N_i}{N} \right|.
\end{equation}
It is easy to check that $0\leq \mu\leq 2$ always holds. 
One advantage of this approach is that $\mu$ is very easy to compute, but there are two major drawbacks. 

\subsubsection{Choice of Domain Discretization}
The choice for domain discretization is not unique, and this choice can dramatically affect the  value of $\mu$, as demonstrated by the following two propositions.

\begin{prop} 
\label{prop:disc1}
If $M=1$ then $\mu=0$.
\end{prop}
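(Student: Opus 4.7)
The plan is very short because this statement is essentially immediate from the definition of $\mu$ in equation~(\ref{eqn:naive_error}) together with the normalization assumption on the target density. When $M=1$, the only admissible partition is $\Omega_1=\Omega$, so the sum defining $\mu$ collapses to a single term.

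First I would substitute $M=1$ and $\Omega_1=\Omega$ into (\ref{eqn:naive_error}) to obtain
\[
\mu = \left| \int_{\Omega} \rho(z)\, dz - \frac{N_1}{N} \right|.
\]
Next I would invoke the assumption stated at the beginning of Section~\ref{sec:error}, namely $\int_\Omega \rho(z)\, dz = 1$, and observe that since every robot lies in $\Omega$ by hypothesis, the count $N_1$ of robots in $\Omega_1=\Omega$ must equal $N$. Plugging these two facts in yields $\mu = |1 - 1| = 0$.

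There is really no obstacle here; the content of the proposition is not mathematical depth but the rhetorical point that the discretization-based error metric can be driven to its theoretical minimum by choosing an overly coarse partition, independently of the actual robot configuration $(x_1,\ldots,x_N)$. I would keep the write-up to a couple of lines, since the real substance of the subsection lies in the complementary Proposition~\ref{prop:disc}, which handles the opposite extreme of very fine discretization.
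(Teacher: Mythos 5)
Your proof is correct and is essentially identical to the paper's, which likewise substitutes $M=1$ into the definition, uses $N_1/N=1$ and $\int_\Omega\rho\,dz=1$, and concludes $\mu=0$. Your version merely spells out the intermediate steps slightly more explicitly.
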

\begin{proof}
When $M=1$, (\ref{eqn:naive_error}) becomes
\[
\mu= \left|\int_\Omega \rho(z)\, dz - 1 \right|=0. 
\]
\end{proof}
The situation of perfectly fine discretization is in complete contrast.
\begin{prop}
\label{prop:disc}
Suppose  the robot positions are distinct\footnote{This is reasonable in practice as two physical robots cannot occupy the same point in space. In addition, the proof can be modified to produce the same result even if the robot positions coincide.} and  the regions $\Omega_i$ are sufficiently small such that, for each $i$, $\Omega_i$  contains at most one robot and   $ \int_{\Omega_i}\rho(z) \, dz \leq 1/N$ holds. Then $\mu\rightarrow 2$ as $|\Omega_i|\rightarrow 0$. 
\end{prop}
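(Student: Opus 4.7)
The plan is to split the sum defining $\mu$ into contributions from regions containing a robot and regions that are empty, use the hypothesis $\int_{\Omega_i}\rho\,dz \le 1/N$ to drop the absolute values, and then show that the only surviving terms vanish as $|\Omega_i|\to 0$.

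First, because the robot positions are distinct and each $\Omega_i$ contains at most one robot, exactly $N$ of the regions (call this index set $\mathcal{O}$) contribute $N_i=1$, and the remaining $M-N$ regions (the set $\mathcal{E}$) contribute $N_i=0$. Splitting \eqref{eqn:naive_error} accordingly and applying the inequality $\int_{\Omega_i}\rho\,dz\le 1/N$ on $\mathcal{O}$ gives
\begin{equation*}
\mu = \sum_{i\in\mathcal{O}}\left(\frac{1}{N} - \int_{\Omega_i}\rho\,dz\right) + \sum_{i\in\mathcal{E}}\int_{\Omega_i}\rho\,dz.
\end{equation*}
Since the $\Omega_i$ partition $\Omega$ and $\int_\Omega \rho\,dz = 1$, the empty-region sum equals $1 - \sum_{i\in\mathcal{O}}\int_{\Omega_i}\rho\,dz$, so I would simplify to
\begin{equation*}
\mu = 2 - 2\sum_{i\in\mathcal{O}}\int_{\Omega_i}\rho\,dz.
\end{equation*}

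Finally, I would argue that $\sum_{i\in\mathcal{O}}\int_{\Omega_i}\rho\,dz \to 0$ as $|\Omega_i|\to 0$. The set $\mathcal{O}$ has at most $N$ elements (a fixed, finite number), and $\rho$ is an integrable density, so absolute continuity of the Lebesgue integral gives $\int_{\Omega_i}\rho\,dz\to 0$ whenever $|\Omega_i|\to 0$; summing $N$ such terms preserves the limit. Hence $\mu\to 2$.

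The only mildly subtle step is the last one: the statement ``$|\Omega_i|\to 0$'' must be understood as $\max_i|\Omega_i|\to 0$ along a refinement of discretizations satisfying the hypotheses, and one must be a bit careful that the number of occupied regions stays bounded by $N$ (which is immediate from the one-robot-per-region assumption) so that the sum over $\mathcal{O}$ remains a sum of a fixed number of vanishing terms rather than a growing sum.
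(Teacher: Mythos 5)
Your proof is correct and follows essentially the same route as the paper's: split the sum into occupied and empty regions, use the hypothesis $\int_{\Omega_i}\rho\,dz \le 1/N$ to remove the absolute values, apply the partition identity to arrive at $\mu = 2 - 2\sum_{i\in\mathcal{O}}\int_{\Omega_i}\rho\,dz$, and let the remaining sum vanish. Your closing remark justifying why the occupied-region sum tends to zero (finitely many terms, absolute continuity of the integral) is a bit more explicit than the paper, which simply asserts the limit, but the argument is the same.
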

\begin{proof}
Let us relabel the $\Omega_i$ so that for $i=1,..., M-N$ there is no robot in $\Omega_i$, and thus each of the $\Omega_i$ for $i=M-N+1, ..., M$ contains exactly one robot. In this case, the expression for error $\mu$ becomes,
\begin{equation}
\label{eqn:naive_error_fine_grid}
\mu = \sum_{i=1}^{M-N} \int_{\Omega_i} \rho \, dz + \sum_{i=M-N+1}^{M}\left| \int_{\Omega_i}\rho \, dz - \frac{1}{N}\right|.
\end{equation}
Since $ \int_{\Omega_i}\rho \, dz \leq 1/N$ holds, then with the identity 
\begin{align*}
\sum_{i=1}^{M-N} \int_{\Omega_i} \rho  \, dz = 1- \sum_{i=M-N+1}^{M} \int_{\Omega_i}\rho \, dz,
\end{align*}
we can rewrite (\ref{eqn:naive_error_fine_grid}) as,
\begin{align*}
\mu = 1- &\sum_{i=M-N+1}^{M} \int_{\Omega_i}\rho \, dz  + \sum_{i=M-N+1}^{M}\left( \frac{1}{N} - \int_{\Omega_i}\rho \, dz \right) \\
&= 2-2 \sum_{i=M-N+1}^{M} \int_{\Omega_i}\rho \, dz.
\end{align*}
Thus $\mu\rightarrow 2$ as $M \rightarrow \infty$ and $|\Omega_i|\rightarrow 0$. 
\end{proof}


Note that the shape of each region is also a choice that will affect the calculated value. While our  approach  also requires the choice of some size and shape (namely, $\delta$ and $K$), these parameters  have much more immediate physical interpretation, making appropriate choices easier to make.

\subsubsection{Error Metric Discretization and Desensitization}
Perhaps more importantly, by discretizing the domain, we also discretize the range of values that the the error metric can assume. While this may not be inherently problematic, we have simultaneously desensitized the error metric to changes in robot distribution within each region. That is, as long as the number of robots $N_i$ within each region $\Omega_i$ does not change, the distribution of robots within any and all $\Omega_i$ may be changed arbitrarily without affecting the value of $\mu$. On the other hand, the error metric $e_N^\delta$ is continuously sensitive to differences in distribution.

\section{\uppercase{Error Metric Extrema}}
\label{sec:extrema}

%

\noindent In the rest of the paper, we provide tools for determining whether or not the values of $e_N^\delta$ produced by a controller in a given situation are ``good". As mentioned in Section \ref{sec:error_properties}, it is simply not possible to achieve $e_N^\delta = 0$ for every combination of target distribution $\rho$, number of robots $N$, and blob size $\delta$ . Therefore, we would like to compare the achieved value of $e_N^\delta$ against its \emph{realizable} extrema given $\rho$, $N$, and $\delta$. But $e_N^\delta$ is a highly nonlinear function of the robot positions $(x_1, ..., x_N)$,
and trying to find its extrema analytically has been intractable. Thus, we approach this problem by using nonlinear programming. 

\subsection{Extrema Bounds via Nonlinear Programming}
Let $x=(x_1,..., x_N)$ represent a vector of $N$ robot coordinates. The optimization problem is
\begin{align}
\label{Optimization Problem}
	&\text{minimize } e_N^{\delta}(x_1,..., x_N),   \\
	&\text{subject to } x_i \in \Omega \text{ for } i \in \{1,2,\hdots,N\}. \notag
\end{align}
Note that the same problem structure can be used to find the maximum of the error metric by minimizing $-e_N^{\delta}$. Given $\rho$, $N$, and $\delta$, we solve these problems using a standard nonlinear programming solver, MATLAB's \texttt{fmincon}. 

A limitation of all general nonlinear programming algorithms is that successful termination produces only a local minimum, which is not guaranteed to be the global minimum. There is no obvious re-formulation of this problem for which a global solution is guaranteed, so the best we can do is to use a local minimum produced by nonlinear programming as an upper bound for the minimum of the error metric. Heuristics, such as multi-start (running the optimization many times from several initial guesses and taking the minimum of the local minima) can be used to make this bound tighter. This bound, which we call $e^-$, and the equivalent bound on the maximum, $e^+$, serve as benchmarks against which we can compare an achieved value of the error metric. This is reasonable, because if a configuration of robots with a lower value of the error metric exists but eludes numerical optimization, it is probably not a fair standard against which to compare the performance of a general controller.

\subsection{Relative Error}
The performance of a robot distribution controller can be quantitatively assessed by calculating the error value $e_{\text{observed}}$ of a robot configuration it produces, and comparing this value against the extrema bounds $e^-$ and $e^+$. If the robot positions $x_1,..., x_N$ produced by a given controller are constant, then $e_{\text{observed}}$ can simply be taken as $e_N^\delta(x_1,..., x_N)$. In general, however, the positions  $x_1,..., x_N$ may change over time. In this case, we suggest using the third-quartile value observed after the system reaches steady state, which we denote $e_\text{Q3}$.

Consider the relative error
\begin{equation*}
\label{eqn:erel}
e_\text{rel} = \frac{e_{\text{observed}} - e^-}{e^+ - e^-}.
\end{equation*}
We suggest that if $e_\text{rel}$ is less than 10\%, the performance of the controller is quite close to the best possible, whereas if this ratio is 30\% or higher, the performance of the controller is rather poor. 

%
%
%

\subsection{Example}
\label{example optimization}
We apply this method to assess the performance of the controller in \cite{li2017decentralized}, which guides a swarm of $N=200$ robots with $\delta = 2$in (the physical radius of the robots) to achieve a ``ring distribution"\footnote{The ring distribution $\rho_{\text{ring}}$ is defined on the Cartesian plane with coordinates $z=(z_1$,$z_2)$ as follows. Let inner radius $r_1 = 11.4$in, outer radius $r_2 = 20.6$in, width $w = 48$in, height $h=70$in, and $\rho_0 = 2.79 \times 10^{-5}$. Let domain $\Omega = \{z: z_1 \in [0, w], z_2 \in [0, h]\}$ and region $\Gamma = \{z:r^2_1 < (z_1 - \frac{w}{2})^2 + (z_2 - \frac{h}{2})^2 < r^2_2\}$. Then $\rho_{\text{ring}}(z) = 36\rho_0 \text{ if } z \in \Omega \cap \Gamma$, $\rho_0 \text{ if } z \in \Omega \setminus \Gamma$.}.

Under this stochastic control law, the behavior of the error metric over time appears to be a noisy decaying exponential. Therefore, we fit to the data shown in Figure 7 of \cite{li2017decentralized} a function of the form $f(t) = \alpha + \beta \exp(-\frac{t}{\tau})$ by finding error asymptote $\alpha$, error range $\beta$, and time constant $\tau$ that minimize the sum of squares of residuals between $f(t)$ and the data. By convention, the steady state settling time is taken to be $t_\text{s} = 4 \tau$, which can be interpreted as the time at which the error has settled to within $2\%$ of its asymptotic value \cite{barbosa2004tuning}. The third quartile value of the error metric for $t > t_\text{s}$ is $e_{Q3} = 0.5157$.

To determine an upper bound on the global minimum of the error metric, we computed 50 local minima of the error metric starting with random initial guesses, then took the lowest of these to be $e^- = 0.28205$. An equivalent procedure bounds the global maximum as $e^+ = 1.9867$, produced when all robot positions coincide near a corner of the domain. The corresponding swarm blob functions are depicted in Figures \ref{fig:error_metric_minimum} and \ref{fig:error_metric_maximum}. 
Note that the minimum of the error metric is significantly higher than zero for this finite number of robots of nonzero radius, emphasizing the importance of performing this benchmark calculation rather than using zero as a reference.

\begin{figure}[ht]
\includegraphics[width=\linewidth]{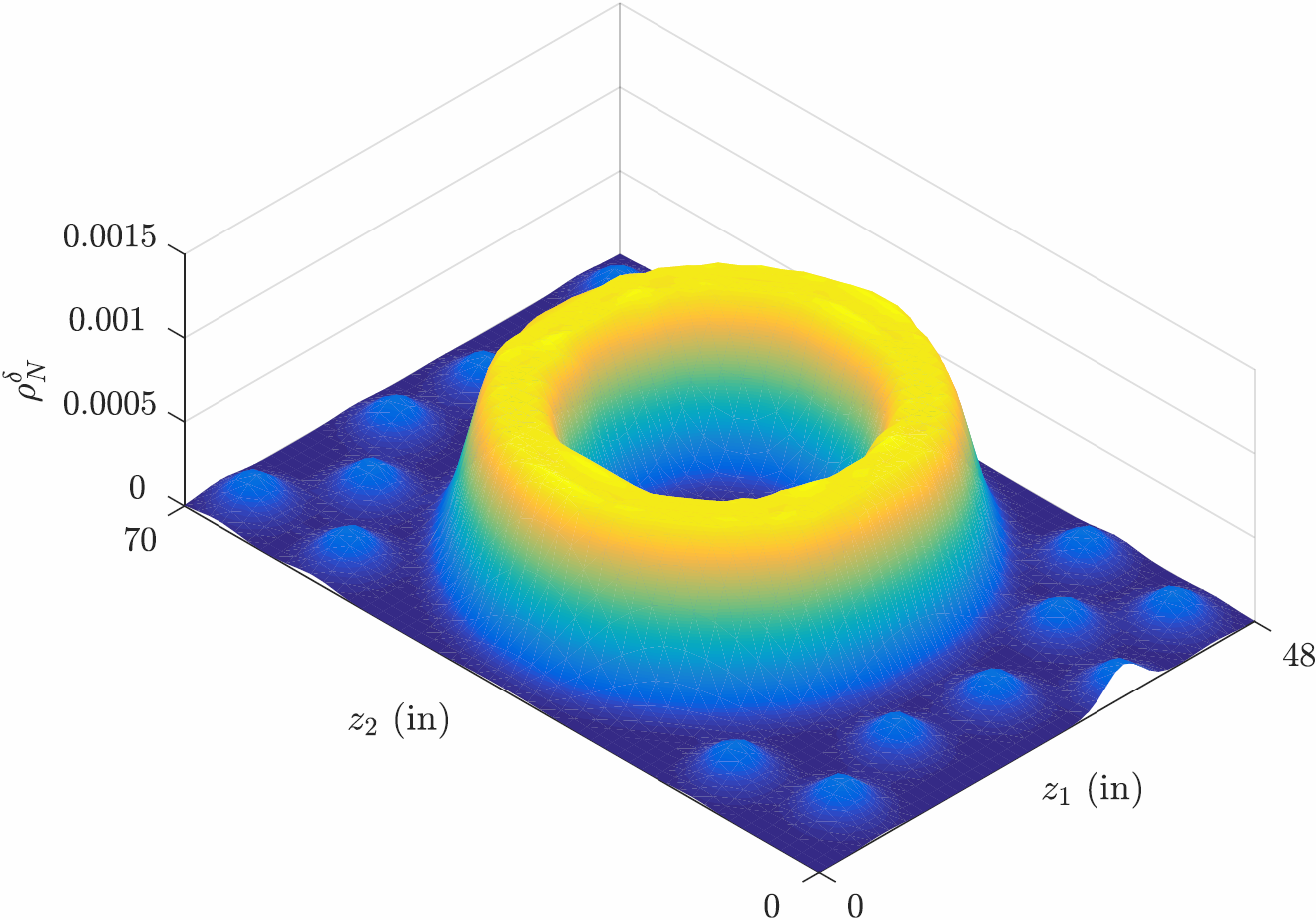}
\caption{Swarm blob function $\rho_{N=200}^{\delta = 2 \text{in}}$ corresponding with the robot distribution that yields a minimum value of the error metric for the ring distribution, 0.28205.}
\label{fig:error_metric_minimum}
\end{figure}

\begin{figure}[ht]
\includegraphics[width=\linewidth]{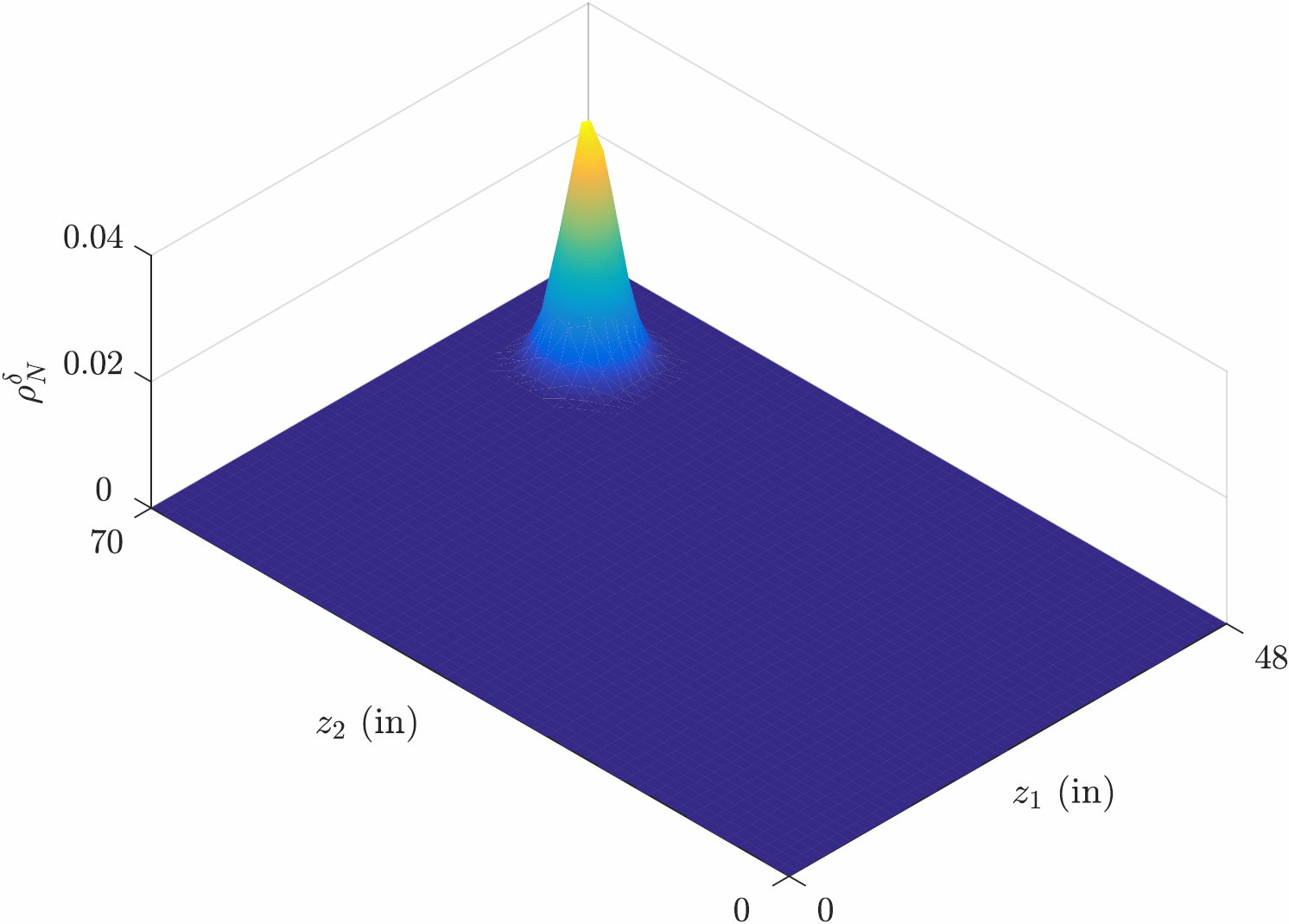}
\caption{Swarm blob function $\rho_{N=200}^{\delta = 2 \text{in}}$ corresponding with the robot distribution that yields a maximum value of the error metric for the ring distribution, 1.9867. This occurs when all robots coincide outside the ring.}
\label{fig:error_metric_maximum}
\end{figure}

Using these values for $e_\text{Q3}$, $e^-$, and $e^+$, we calculate $e_\text{rel}$ according to Equation \ref{eqn:erel} as $13.71\%$.

While the sentiment of the $e_\text{rel}$ benchmark is found in \cite{li2017decentralized}, we have made three important improvements to the calculation to make it suitable for general use. First, in \cite{li2017decentralized} values analogous to $e^-$ and $e^+$ were found by ``manual placement'' of robots, whereas we have used nonlinear programming so that the calculation is objective and repeatable. Second, \cite{li2017decentralized} refers to steady state but does not suggest a definition. Adopting the 2\% settling time convention not only allows for an unambiguous calculation of $e_{Q3}$ and other steady-state error metric statistics, but provides a metric for assessing the speed with which the control law effects the desired distribution.  Finally, \cite{li2017decentralized} uses the \emph{minimum observed value} of the error metric in the calculation, but we suggest that the third quartile value better represents the distribution of error metric values achieved by a controller, and thus is more representative of the controller's overall performance. 

These changes account for the difference between our calculated value of $e_\text{rel} = 13.71\%$ and the report in \cite{li2017decentralized} that the error is  ``$7.2$\% of the range between the minimum error value \dots and maximum error value''. Our substantially higher value of $e_\text{rel}$ indicates that the performance of this controller is not very close the best possible. We emphasize this to motivate the need for our second benchmark in Section \ref{sec:errorpdf}, which is more appropriate for a stochastic controller like the one presented in \cite{li2017decentralized}.

\subsection{Error Metric for Optimal Swarm Design}
So far we have taken $N$ and $\delta$ to be fixed; we have assumed that the robotic agents and size of the swarm have already been chosen. We briefly consider the use of the error metric as an objective function for the \emph{design} of a swarm. Simply adding $\delta > 0$ as a decision variable to (\ref{Optimization Problem}) and solving at several fixed values of $N$ provides insight into how many robots of what effective working radius are needed to achieve a given level of coverage for a particular target distribution. Visualizations of such calculations are provided in Figure \ref{fig:blobs} and the supplemental video.

\begin{figure*}
\includegraphics[width=\linewidth]{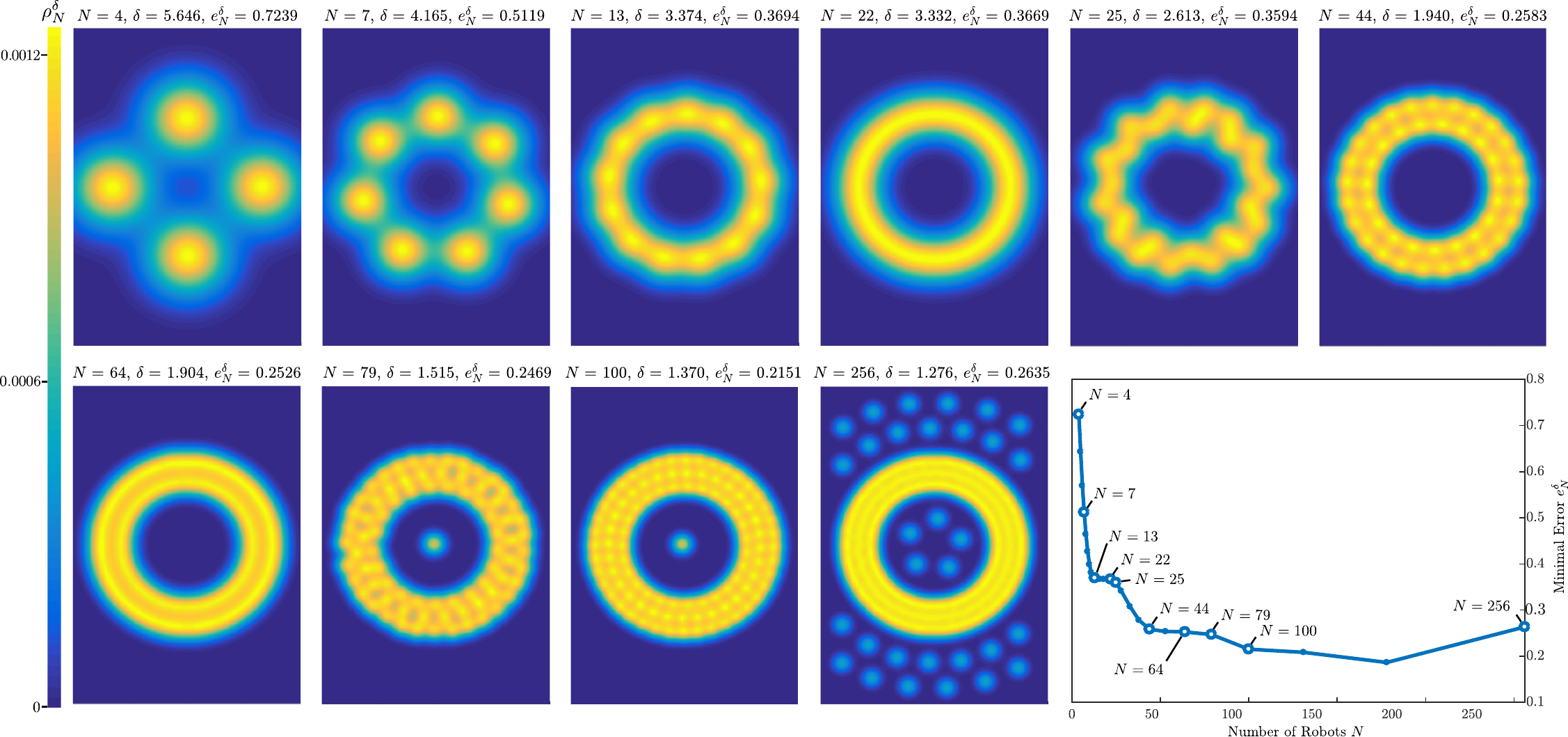}
\centering
\caption{Swarm blob functions $\rho_{N}^{\delta}$ corresponding with the robot distributions and values of $\delta$ that yield the minimum value of the error metric for the ring distribution target. Inset graph shows the relationship between $N$ and the minimum value of the error metric observed from repeated numerical optimization. Due to long execution time of optimization at $N = 256$, fewer local minima were calculated; this likely explain the rise in the minimal error metric value. This highlights the need in future work to find a more efficient formulation of this optimization problem or to use a more effective solver.}
\label{fig:blobs}
\end{figure*}

Note that `breakthroughs', or relatively rapid decreases in the error metric, can occur once a critical number of robots are available; these correspond with a qualitative change in the distribution of robots. For example, at $N=22$ the robots are arranged in a single ring; beginning with $N=25$ we see the robots begin to be arranged in two separate concentric rings of different radii and the error metric begins to drop sharply. On a related note, there are also ``lulls" in which increasing the number of robots has little effect on the minimum value of the error metric, such as between $N=44$ and $N=79$. Studies like these can help a swarm designer determine the best number of robots $N$ and effective radius of each $\delta$ to achieve the required coverage.

\section{\uppercase{Error Metric Probability Density Function}}
\label{sec:errorpdf}
\noindent In the previous section we have described how to find bounds on the minimum and maximum values for error. A question that remains is, how ``easy" or ``difficult" is it to achieve such  values? 
Answering this question is important in order to use the error metric to assess the effectiveness of an underlying control law. Indeed, a given control law --- especially a stochastic control law --- may tend to produce robot positions with error well above the minimum, and it is necessary to assess these values as well.

According to the setup of our problem, the goal of any such control law is for the robots to achieve the desired distribution $\rho$. Thus,  whatever the particular control law is, it is natural to compare its outcome to simply picking the robot positions at random from the target distribution $\rho$. In this section we consider the robot positions as being sampled directly from the desired distribution and study the statistical properties of the error metric in this situation, both analytically and numerically, and suggest how they may be used as a benchmark with which to evaluate the performance of a swarm distribution controller. 


We take the robots' positions $X_1$, \dots, $X_N$, to be independent, identically distributed bivariate random vectors in $\Omega\subset \RR^2$ with probability density function $\rho$. We place a blob of shape $K$ at each of the $X_i$ (previously we took $K$ to be the Gaussian $G$), so that the swarm blob function is,
\begin{equation}
\label{rho random}
\rho_N^\delta (z) = \frac{1}{N\delta^2}\sum_{i=1}^N K^\delta\left(z - X_i\right),
\end{equation}
where $K^\delta$ is defined by (\ref{def:Kdelta}). We point out that the right-hand side of (\ref{rho random}) is exactly that of (\ref{eqn:rho with N}) upon taking $K$ to be the Gaussian $G$ and the robot locations $x_i$ to be the randomly selected $X_i$. The error $e_N^\delta$ is now a random variable, the value of which depends on the particular realization of the robot positions $X_1$, \dots, $X_N$, but which has a well-defined probability density function (PDF) and cumulative distribution function (CDF). We denote the PDF and CDF by $f_{e_N^\delta}$ and $F_{e_N^\delta}$, respectively. The performance of a stochastic robot distribution controller can be quantitatively assessed by calculating the error values $e_N^\delta(x_1,..., x_N)$ it produces in steady state and comparing their distribution to $f_{e_N^\delta}$. 

In Subsection \ref{theory converge} we present rigorous results that show that the error metric has an approximately normal distribution in this case. As a corollary we obtain that the limit of this error is zero  as $N$ approaches infinity and $\delta$ approaches 0. Subsections \ref{subsec:num} and \ref{subsec: num ex} include a numerical demonstration of these results. In addition, in \ref{subsec: num ex}, we present an example calculation.

The theoretical results presented in the next subsection not only support our numerical findings, they also allow for faster computation. Indeed, if one did not already know that the error when robots are sampled randomly from $\rho$ has a normal distribution for large $N$, tremendous computation may be needed to get an accurate estimate of this probability density function. On the other hand, since the results we present prove that the error metric has a normal distribution for large $N$, we need only fit a Gaussian function to the results of relatively little computation. 


\subsection{Theoretical Central Limit Theorem}
\label{theory converge}
The expression (\ref{rho random}) is the so-called \emph{kernel density estimator} of $\rho$.  This arises in statistics, where  $\rho$ is thought of as unknown, and $\rho_N^\delta$ is considered as an approximation to $\rho$.  
It turns out that, under appropriate hypotheses, the $L^1$ error between $\rho$ and $\rho^\delta_N$ has a normal distribution with mean and variance that approach zero as $N$ approaches infinity. In other words, a central limit theorem holds for the error. For such a result to hold, $\delta$ and $N$ have to be compatible. Thus, for the remainder of this subsection $\delta$ will depend on $N$, and we display this as $\delta(N)$. 
We have,
\begin{thm}
\label{thm:H}
Suppose $\rho$ is continuously twice differentiable, $K$ is zero outside of some bounded region and radially symmetric. Then,  for $\delta(N)$ satisfying
\begin{equation}
\label{condition delta}
\delta(N)=O(N^{-1/6}) \text{ and } \lim_{N\rightarrow \infty}\delta(N) N^{1/4}=\infty,
\end{equation}
we have 
\[
e^{\delta(N)}_N \approx \mathcal{N}\left(\frac{e(N)}{N^{1/2}}, \frac{\sigma(N)^2\delta(N)^2}{N}\right),
\]
where $\sigma^2(N)$ and $e(N)$ are deterministic quantities that are bounded uniformly in $N$.\footnote{Here $\mathcal{N}(\mu, \sigma^2)$ denotes the normal random variable of mean $\mu$ and variance $\sigma^2$, and we use the notation $\approx$ to mean  that the difference of the quantity on the left-hand side and on the right-hand side converges to zero in the sense of distributions as $N\rightarrow\infty$.}
\end{thm}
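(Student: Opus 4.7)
The plan is to recognize $e_N^{\delta(N)}$ as the $L^1$-norm of the error of a kernel density estimator of $\rho$ and to follow the strategy of the classical $L^1$ central limit theorem for such estimators. First I would perform the bias--variance decomposition
\[
\rho_N^\delta(z) - \rho(z) = \bigl(\rho_N^\delta(z) - m^\delta(z)\bigr) + \bigl(m^\delta(z) - \rho(z)\bigr),
\]
where $m^\delta(z) := E[\rho_N^\delta(z)] = (K^\delta \ast \rho)(z)$ is deterministic. Because $\rho$ is twice continuously differentiable and $K$ is compactly supported and radially symmetric, a second-order Taylor expansion shows that the bias is of order $\delta^2$ pointwise, while the stochastic part has pointwise variance of order $(N\delta^2)^{-1}$. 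The hypothesis $\delta(N) = O(N^{-1/6})$ is precisely the balance $\delta^2 \lesssim (N\delta^2)^{-1/2}$ that keeps the bias below the typical fluctuation, and the hypothesis $\delta(N)\,N^{1/4} \to \infty$ is equivalent to $N\delta^2 \gg N^{1/2}$, which plays the role of a Lindeberg-type growth condition ensuring that each blob scale absorbs enough samples to activate a CLT.

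The main step, and the principal obstacle, is to show that the centred integral
\[
I_N := \int_\Omega \bigl|\rho_N^\delta(z) - m^\delta(z)\bigr|\,dz
\]
is asymptotically normal. The difficulty is twofold: $|\cdot|$ is a nonlinear functional, and the fluctuations of $\rho_N^\delta - m^\delta$ at two points $z,z'$ with $|z-z'| \lesssim \delta$ are strongly correlated, so one cannot simply apply a pointwise CLT and integrate. I would use the standard Poissonization argument: replace the fixed sample size $N$ by an independent Poisson count $N' \sim \mathrm{Poisson}(N)$, so that the sample becomes a Poisson point process with intensity $N\rho$. Partitioning $\Omega$ into cells of side length a large multiple of $\delta(N)$ makes the contributions to $I_N$ from distinct cells independent, so that the Lindeberg--Feller theorem applies to their sum, and a standard de-Poissonization step transfers the conclusion back to the fixed-$N$ model.

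Finally, computing moments cellwise identifies the deterministic quantities $e(N)$ and $\sigma(N)^2$. The half-normal identity $E|Z| = \sqrt{2/\pi}\,\sigma$ applied to the Gaussian limit of $\rho_N^\delta(z) - m^\delta(z)$ gives, at leading order,
\[
E[I_N] \approx \sqrt{\tfrac{2}{\pi}}\int_\Omega \sqrt{\mathrm{Var}\bigl(\rho_N^\delta(z)\bigr)}\,dz,
\]
which, after substituting $\mathrm{Var}(\rho_N^\delta(z)) \approx \|K\|_2^2\,\rho(z)/(N\delta^2)$, can be cast in the form $e(N)/N^{1/2}$; a parallel cellwise second-moment calculation produces $\mathrm{Var}(I_N)$ in the form $\sigma(N)^2\,\delta(N)^2/N$. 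The residual contribution of the bias, when coupled back through the sign of $\rho_N^\delta - \rho$, is of strictly smaller order under (\ref{condition delta}) and is absorbed in the convergence in distribution. I expect the Poissonization/de-Poissonization step, together with uniform Lindeberg control near $\partial\Omega$, to carry the bulk of the technical work, since it is there that the precise two-sided bounds on $\delta(N)$ enter essentially.
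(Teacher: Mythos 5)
The paper does not prove this theorem directly: its entire proof is a citation to Horv\'ath (1991, Theorem, p.~1935), plus a dictionary identifying $N,\delta,\rho,\rho_N^\delta,e_N^\delta$ with Horv\'ath's $n,h,f,f_n,I_n$, plus a short argument for the one claim not explicit in that reference --- the uniform boundedness of $e(N)$, which reduces to the boundedness of Horv\'ath's $m_N$ via a second-order Taylor expansion of $\rho$ and the symmetry of $K$ (i.e.\ exactly the bias computation you perform). You instead sketch a self-contained proof along the classical Poissonization route for $L^1$-norms of kernel density estimators (the strategy of Beirlant--Mason and Gin\'e--Mason--Zaitsev, rather than Horv\'ath's own argument). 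The architecture is the right one, and your reading of the two bandwidth conditions is accurate: $\delta=O(N^{-1/6})$ is $\delta^2\lesssim (N\delta^2)^{-1/2}$, keeping the bias below the stochastic mean, and $\delta N^{1/4}\to\infty$ is $N\delta^2\gg N^{1/2}$. What the citation buys the paper is that the genuinely hard steps (de-Poissonization, covariance control across cells, boundary effects) are already done; what your route would buy, if completed, is independence from hypotheses the reader must check against a 1991 reference.

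As a proof of the theorem \emph{as stated}, however, your sketch has a concrete gap in the normalization, and it sits exactly where the paper's proof does its only real work. Your own leading-order formula gives
\[
E[I_N]\;\approx\;\sqrt{\tfrac{2}{\pi}}\,\|K\|_{2}\Bigl(\int_\Omega\sqrt{\rho(z)}\,dz\Bigr)\,\frac{1}{N^{1/2}\,\delta(N)},
\]
which is of the form $e(N)/N^{1/2}$ only if $e(N)\propto\delta(N)^{-1}$ --- and that is \emph{not} uniformly bounded in $N$, contradicting the clause of the theorem you are asked to establish. The analogous issue arises for the variance: the local covariance estimate (covariance of $|\rho_N^\delta-m^\delta|$ at $z,z'$ is $O((N\delta^2)^{-1})$ on a set of $z'$ of measure $O(\delta^2)$) yields $\mathrm{Var}(I_N)=\Theta(1/N)$ rather than $\Theta(\delta^2/N)$ with bounded $\sigma(N)$. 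So you must either attach the extra powers of $\delta$ to the definitions of $e(N)$ and $\sigma(N)$ and then prove their uniform boundedness separately (as the paper does for $e(N)$ through $m_N$), or reconcile your normalization with the one in the statement; ``can be cast in the form'' does not discharge this. Relatedly, ``a standard de-Poissonization step'' and ``uniform Lindeberg control'' are precisely where the lower bound $\delta N^{1/4}\to\infty$ is consumed, so those steps cannot be left implicit in a proof that claims to use that hypothesis essentially.
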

\begin{proof}
This follows from Horv\'ath \cite[Theorem, page 1935]{horvath1991lp}. For the convenience of the reader, we record that the quantities $N$, $\delta$, $\rho$, $\rho^\delta_N$, $e_N^\delta$ that we use here correspond to $n$, $h$, $f$, $f_n$, $I_n$ in \cite{horvath1991lp}. We do not present the exact expressions for $\sigma(N)$ and $e(N)$; they are written in \cite[page 1934]{horvath1991lp}. The uniform boundedness of $\sigma$ is exactly line (1.2) of \cite{horvath1991lp};  the  boundedness for $e(N)$ is not written explicitly in \cite{horvath1991lp} so we briefly explain  how to derive it. In the expression for $e(N)$ in \cite{horvath1991lp}, $m_N$ is the only term that depends on $N$. A standard argument that uses the Taylor expansion of $\rho$ and the symmetry of the kernel $K$ (see, for example, Section 2.4 of the lecture notes \cite{hansen2009lecture}) yields that $m_N$ is uniformly bounded in $N$.  
\end{proof}
From this it is easy to deduce:
\begin{cor}
Under the hypotheses of Theorem \ref{thm:H}, the error $e_N^{\delta(N)}$ converges in distribution to zero as $N\rightarrow \infty$.
\end{cor}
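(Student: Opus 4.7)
The plan is to read off convergence of the parameters in Theorem \ref{thm:H} and then conclude via a Slutsky-type argument, using the fact that convergence in distribution to a constant coincides with convergence in probability.

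First, I would introduce an auxiliary sequence $Y_N \sim \mathcal{N}\bigl(\mu_N,\tau_N^2\bigr)$ with $\mu_N := e(N)/N^{1/2}$ and $\tau_N^2 := \sigma(N)^2\delta(N)^2/N$, so that Theorem \ref{thm:H} says $e_N^{\delta(N)} - Y_N \to 0$ in distribution. The goal then reduces to showing $Y_N \to 0$ in distribution. Because the target limit $0$ is a constant, convergence in distribution, in probability, and in $L^2$ to $0$ all coincide, so it is enough to show $\mu_N \to 0$ and $\tau_N^2 \to 0$.

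Second, I would verify these two limits directly from the hypotheses. Since $e(N)$ is uniformly bounded in $N$ by Theorem \ref{thm:H}, we have $\mu_N = O(N^{-1/2}) \to 0$. For the variance, the uniform bound on $\sigma(N)$ together with the assumption $\delta(N) = O(N^{-1/6})$ gives
\[
\tau_N^2 = \frac{\sigma(N)^2\delta(N)^2}{N} = O\!\left(\frac{N^{-1/3}}{N}\right) = O(N^{-4/3}) \to 0.
\]
(Note the second condition $\delta(N)N^{1/4}\to \infty$ is not needed here; it is only used to guarantee the normal approximation in Theorem \ref{thm:H}.) Thus $Y_N = \mu_N + \tau_N Z$ with $Z \sim \mathcal{N}(0,1)$, and both summands tend to $0$ deterministically/in $L^2$, so $Y_N \to 0$ in probability, hence in distribution.

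Finally, I would combine the two pieces. Since $e_N^{\delta(N)} - Y_N \to 0$ in distribution (equivalently in probability, as the limit is the constant $0$) and $Y_N \to 0$ in probability, adding gives $e_N^{\delta(N)} \to 0$ in probability, and therefore in distribution. There is no substantive obstacle here: the only thing one must be careful about is the meaning of $\approx$ from Theorem \ref{thm:H}, and the standard fact that convergence in distribution to a constant upgrades to convergence in probability, which makes the Slutsky-style combination of the two limits immediate.
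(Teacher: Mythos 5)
Your proposal is correct and is precisely the deduction the paper intends (the paper leaves it as ``easy to deduce'' without writing it out): the uniform boundedness of $e(N)$ and $\sigma(N)$ forces both the mean $e(N)/N^{1/2}$ and the variance $\sigma(N)^2\delta(N)^2/N$ of the approximating normal to vanish, and the Slutsky-type combination finishes the argument. Your observation that only boundedness of $\delta(N)$ (not the full condition (\ref{condition delta})) is needed at this stage is accurate.
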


\begin{rem}
\label{rem:smooth} 
There are a few ways in which practical situations may not align perfectly with the assumptions of \cite{horvath1991lp}. However, we posit that in all of these cases, the difference between these situations and that studied in \cite{horvath1991lp} is numerically insignificant. We now briefly summarize these three discrepancies and indicate how to resolve them. 

First, we defined our density $\rho_N^\delta$ by (\ref{eqn:normalized_blob_function}), but in this section we use a version with denominator $N$. However, as explained above, the two expressions approach each other for small $\delta$, and this is the situation we are interested in here. Second, a $\rho$ that is piecewise continuous like the ring distribution is not twice differentiable. We point out that an arbitrary density $\rho$ may be approximated to arbitrary precision by a smoothed out version, for example by convolution with a mollifier (a standard reference is Brezis \cite[Section 4.4]{brezis2010functional}).  Third, in our computations we use the kernel $G$, which is not compactly supported, for the sake of simplicity. Similarly, this kernel can be approximated, with arbitrary accuracy, by a compactly supported version. Making these changes to the kernel or target density would not affect the conclusions of numerical results.
\end{rem}

\subsection{Numerical Approximation of the Error Metric PDF}
\label{subsec:num}
In this subsection we describe how to numerically find $f_{e_N^\delta}$ and $F_{e_N^\delta}$.  
For sufficiently large $N$, one could simply use random sampling to estimate the mean and standard deviation, then take these as the parameters of the normal PDF (i.e. the error function and Gaussian function, respectively). However, for moderate $N$, we choose to begin by estimating the entire CDF and confirming that it is approximately normal. We first establish:  
\begin{prop} We have, 
\begin{equation}
F_{e_N^\delta}(z)=\int_{\Omega^N} \boldsymbol{1}_{\{ x\mid e_N^{\delta}(x)\le z \}} \prod^N_{i=1} \rho( x_i)  dx. \label{Derived Distribution}
\end{equation}
\end{prop}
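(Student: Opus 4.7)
The plan is to show that the right-hand side is simply the definition of $P(e_N^\delta \le z)$ unpacked in terms of the joint density of the random robot positions. First I would recall that, by assumption, the positions $X_1,\dots,X_N$ are independent and identically distributed on $\Omega$ with common density $\rho$, so their joint density on $\Omega^N$ is the product $\prod_{i=1}^N \rho(x_i)$. The CDF is then defined by
\[
F_{e_N^\delta}(z) = P\!\left(e_N^\delta(X_1,\dots,X_N) \le z\right),
\]
so the task reduces to expressing this probability as an integral of the joint density over the preimage $A_z = \{x \in \Omega^N : e_N^\delta(x)\le z\}$.

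Next I would verify that $A_z$ is a measurable subset of $\Omega^N$, so that the integral in question is well-defined. The map $x = (x_1,\dots,x_N)\mapsto e_N^\delta(x)$ is continuous in $x$, because $\rho_N^\delta(\cdot\,; x)$ depends continuously on each $x_i$ through the smooth kernel $G^\delta$ (with a strictly positive denominator), and the $L^1$-norm against the fixed function $\rho$ depends continuously on $\rho_N^\delta$; hence the preimage of $(-\infty,z]$ under a continuous function is closed, and in particular Borel measurable.

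With measurability in hand, the standard change of variables (push-forward) for expectations of functions of random vectors yields
\[
P\!\left(e_N^\delta(X)\le z\right) = E\!\left[\boldsymbol{1}_{A_z}(X)\right] = \int_{\Omega^N} \boldsymbol{1}_{A_z}(x)\, p_X(x)\,dx,
\]
where $p_X(x) = \prod_{i=1}^N \rho(x_i)$ is the joint density. Substituting this product and the definition of $A_z$ gives exactly the claimed formula.

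The argument is essentially a bookkeeping exercise, so there is no genuinely hard step; the only point that requires a brief check is the measurability of $A_z$, which follows from the continuity of $e_N^\delta$ noted above. Everything else is a direct application of the definition of CDF, independence, and Fubini's theorem as needed to interpret the joint density as a product.
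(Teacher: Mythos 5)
Your proof is correct and follows essentially the same route as the paper: identify the joint density of $(X_1,\dots,X_N)$ as $\prod_{i=1}^N\rho(x_i)$ by independence and apply the standard formula $\mathbb{P}(g(Y)\le z)=\int \boldsymbol{1}_{\{g\le z\}}f\,dy$. The measurability check you include is a reasonable extra bit of care that the paper omits, but it does not change the argument.
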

\begin{proof}
We recall a basic probability fact. Let $Y$ be a random vector with values in $A\subset \RR^D$  with probability density function $f$, and let $g$ be a real-valued function on $\RR^d$. The CDF for $g(Y)$, denoted $F_{g(Y)}$,  is given by,
\begin{equation}
\label{cdf}
F_{g(Y)}(z) = \mathbb{P}(g(Y) \leq z) = \int_{A} \boldsymbol{1}_{\{y| g(y) \leq z\}} f(y) dy,
\end{equation}
where $\boldsymbol{1}$ denotes the indicator function. 

In our situation, we take the random vector $Y$ to be $X:=(X_1, ..., X_N)$. Since $X$ takes values in $\Omega^N:=\Omega\times...\times\Omega$, we take $A$ to be $\Omega^N$ (we point out that here $D=2N$). Since each $X_i$ has density $\rho$, the density of $X$ is the function $\tilde{\rho}$, defined by,
\[
\tilde{\rho}(x_1, ..., x_n) = \prod^N_{i=1} \rho( x_i).
\] 
Thus, taking $f$ and $g$ in (\ref{cdf}) to be $\tilde{\rho}$ and  $e_N^\delta$, respectively, yields (\ref{Derived Distribution}).  
\end{proof}

Notice that since each of the $x_i$ is itself a 2-dimensional vector (the $X_i$ are random points in the plane and we are using the notation $x=(x_1,..., x_N)$),  the integral defining the cumulative distribution function of the error metric is of dimension $2N$. Finding analytical representations for the CDF is combinatorially complex and quickly becomes infeasible for large swarms. Therefore, we approximate  (\ref{Derived Distribution}) using Monte Carlo integration, which is well-suited for high-dimensional integration \cite{sloan2010integration}\footnote{Quasi-Monte Carlo techniques, which use a low-discrepancy sequence rather than truly random evaluation points, promise somewhat faster convergence but require considerably greater effort to implement. The difficulty is in generating a low-discrepancy sequence from the desired distribution, which is possible using the Hlawka-M\"uck method, but computationally expensive \cite{hartinger2006non}.}, and fit a Gauss error function to the data. If the fitted curve matches the data well, we differentiate to obtain the PDF. We remark that we have used the notation of an indicator function above in order to express the quantity of interest in a way that is easily approximated with Monte Carlo integration.

\subsection{Example}
\label{subsec: num ex}

We apply this method to assess the performance of the controller in \cite{li2017decentralized}, again for the ``ring distribution'' scenario with $N=200$ robots described in Section \ref{example optimization}.


We approximate  $F_{e_N^\delta}$ using $M=1000$ Monte Carlo evaluation points; this is shown by a solid gray line in Figure \ref{fig:error_probabilities}. The numerical approximation appears to closely match a Gauss error function ($\erf(\cdot)$, the integral of a Gaussian $G$) as theory predicts. Therefore an analytical $\erf(\cdot)$ curve, represented by the dashed line, is fit to the data using MATLAB's least squares curve fitting routine \texttt{lsqcurvefit}. To obtain $f_{e_N^\delta}$, the analytical curve fit for $F_{e_N^\delta}$ is differentiated, and the result is also shown in Figure \ref{fig:error_probabilities}.

\begin{figure}[ht]
\includegraphics[width=\linewidth]{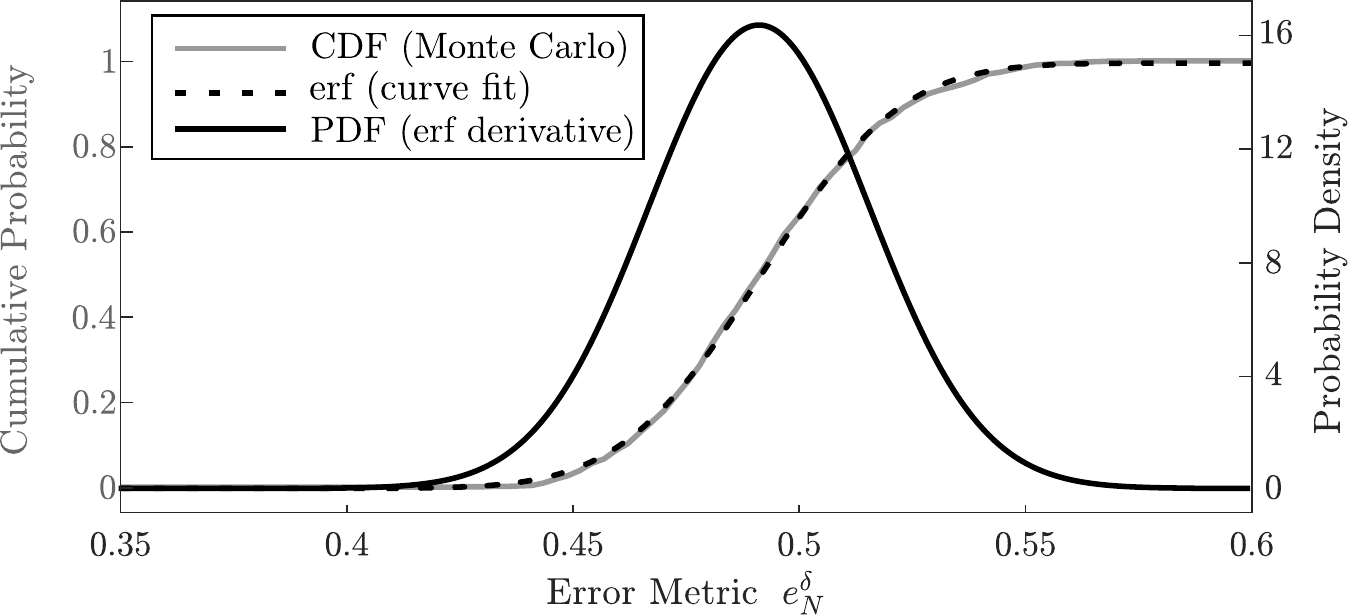}
\caption{The CDF of the error metric when robot positions are sampled from $\rho$ is approximated by Monte Carlo integration, an $\erf$ curve fit matches closely, and the PDF is taken as the derivative of the fitted $\erf$.}
\label{fig:error_probabilities}
\end{figure}

With the error metric distribution now confirmed to be approximately normal, the F- and T-tests \cite{moore2009introduction} are appropriate statistical procedures for comparing the steady state error distribution to $f_{e_N^\delta}$.

From data presented as Figure 7 of \cite{li2017decentralized}, we calculate the distribution of steady state error metric values produced by the controller to have a mean of $0.5026$ with a standard deviation of $0.02586$. We take the null hypothesis to be that the distribution of these error metric values is the same as $f_{e_N^\delta}$, which has a sample mean of 0.4933 and a standard deviation of 0.02484, as calculated from the $M = 1000$ samples. A two-sample F-test fails to refute the null hypothesis, with an F-statistic of 1.0831, indicating no significant difference in the standard deviations. On the other hand, a two-sample T-test rejects the null hypothesis with a T-statistic of 8.5888, indicating that the steady state error is not distributed with the same population mean as $f_{e_N^\delta}$. However, the 95\% confidence interval for the true difference between population means is computed to be $(0.00717,0.01141)$, showing that the mean steady state error achieved by this controller is unlikely to exceed that of $f_{e_N^\delta}$ by more than 2.31\%. Therefore, we find the performance of the controller in \cite{li2017decentralized} to be acceptable given its stochastic nature, as the error metric values it realizes are only slightly different from those produced by sampling robot positions from the target distribution.

As with $e_\text{rel}$ of Section \ref{sec:extrema}, the sentiment of this benchmark is preceded by \cite{li2017decentralized}. However, without prior knowledge that $f_{e_N^\delta}$ would be approximately Gaussian, the  calculation took two orders of magnitude more computation in that study\footnote{According to the caption of Figure 2 of \cite{li2017decentralized}, the figure was generated as a histogram from 100,000 Monte Carlo samples.}. Also, where it is noted in \cite{li2017decentralized} that ``the error values [from simulation] mostly lie between \dots the 25th and 75th percentile error values when robot configurations are randomly sampled from the target distribution'', we have replaced visual inspection with the appropriate statistical tests for comparing two approximately normal distributions. These improvements make this error metric PDF benchmark objective and efficient, and thus suitable for common use.

\section{\uppercase{Future Work and Conclusion}}
\label{sec:conclusion}

\noindent While the concepts presented herein are expected to be sufficient for the comparison and evaluation of swarm distribution controllers, the computational techniques are certainly open to analysis and improvement. For instance, is there a simpler, deterministic method of approximating the error metric PDF $f_{e_N^\delta}$? Is there a more appropriate formulation for determining the extrema of the error metric for a given situation, one that is guaranteed to produce a global optimum? If not, which nonlinear programming algorithm is most suitable for solving (\ref{Optimization Problem})? In practice, what method of quadrature converges most efficiently to approximate the error metric? As the size of practical robot swarms will likely grow faster than processor speeds will increase, improved computational techniques will be needed to keep benchmark computations practical.

Also, a very important question remains about the nature of the error metric.  The blob shape $K^\delta$ has an intuitive physical interpretation, and so a reasonable choice is typically easy to make. The value of the error metric for a particular situation is certainly affected by the choice of blob shape $K$ and radius $\delta$, but so are the values of the proposed benchmarks: the extrema and the error PDF. Are qualitative conclusions made by comparing the performance of the controller to these benchmarks likely to be affected by the choice of blob? 

Open questions notwithstanding, the error metric presented herein is sufficiently general to be used in quantifying the the performance of one of the most fundamental tasks of a robotic swarm controller: achieving a prescribed density distribution. The error metric is sensitive enough to compare the effectiveness of given control laws for achieving a given target distribution. The error metric parameters, blob shape and radius, have  intuitive physical interpretations so that they can be chosen appropriately. Should a designer wish to interpret the performance of a given controller without comparing against results of another controller, we provide two benchmarks that can be applied to any situation: extrema of the error metric, and the  probability density function of the error metric when swarm configurations are sampled from the target distribution. Using the provided code, these methods can easily be used to quantitatively assess the performance of new swarm controllers and thereby improve the effectiveness of practical robot swarms.

\section*{\uppercase{Acknowledgements}}

\noindent The authors gratefully acknowledge the support of NSF grant CMMI-1435709, NSF grant DMS-1502253, the Dept. of Mathematics at UCLA, and the Dept. of Mathematics at Harvey Mudd College. The authors also thank Hao Li (UCLA) for his insightful contributions regarding the connection between this error metric and kernel density estimation.

\bibliographystyle{apalike}
{\small
\bibliography{ICINCO_2018}}
\end{document}